\theoremstyle{plain}
\newtheorem{theorem}{Theorem}[section]
\newtheorem{lemma}[theorem]{Lemma}
\newtheorem{proposition}[theorem]{Proposition}
\theoremstyle{definition}
\theoremstyle{remark}
\newtheorem{remark}[theorem]{Remark}
\def\bin #1#2 {\left( \matrix { #1 \cr #2 \cr } \right) }
\begin{document}

\title[The genus of  curves in $\mathbb P^4$ and $\mathbb P^5$ not contained in quadrics]
{The genus of  curves in $\mathbb P^4$ and $\mathbb P^5$ not
contained in quadrics}

\author{Vincenzo Di Gennaro }
\address{Universit\`a di Roma \lq\lq Tor Vergata\rq\rq, Dipartimento di Matematica,
Via della Ricerca Scientifica, 00133 Roma, Italy.}
\email{digennar@mat.uniroma2.it}

\abstract A classical problem in the theory of projective curves is
the classification of all their possible genera in terms of the
degree  and the dimension of the space where they are embedded.
Fixed integers $r,d,s$, Castelnuovo-Halphen's theory states a sharp
upper bound for the genus of a non-degenerate, reduced and
irreducible curve of degree $d$ in $\mathbb P^r$, under the
condition of being not contained in a surface of degree $<s$. This
theory can be generalized in several ways. For instance, fixed
integers $r,d,k$, one may ask for the maximal genus of a curve of
degree $d$ in $\mathbb P^r$, not contained in a hypersurface of a
degree $<k$. In the present paper we examine the genus of curves $C$
of degree $d$ in $\mathbb P^r$ not contained in quadrics (i.e.
$h^0(\mathbb P^r, \mathcal I_C(2))=0$). When $r=4$ and $r=5$, and
$d\gg0$, we exhibit a sharp upper bound for the genus. For certain
values of $r\geq 7$, we are able to determine a sharp bound except
for a constant term, and the argument applies also to curves not
contained in cubics.

\bigskip\noindent {\it{Keywords}}: Projective curve. Castelnuovo-Halphen Theory.
Quadric and cubic hypersurfaces. Veronese surface. Projection of a
rational normal scroll surface. Maximal rank.

\medskip\noindent {\it{MSC2010}}\,: Primary 14N15; Secondary 14N25;
14M05; 14M06; 14M10; 14J26; 14J70

\endabstract
\maketitle

\section{Introduction}
A classical problem in the theory of projective curves is the
classification of all their possible genera in terms of the degree
and the dimension of the space where they are embedded
\cite{Hartshorne}, \cite{EH}, \cite{CCD}. Fixed integers $r,d,s$,
Castelnuovo-Halphen's theory states a sharp upper bound for the
genus of a non-degenerate, reduced and irreducible curve of degree
$d$ in $\mathbb P^r$, under the condition of being not contained in
a surface of degree $<s$ \cite{EH}, \cite{GP1}, \cite{CCD}. This
theory can be generalized imposing flag conditions \cite{CCD2}. For
instance, fixed integers $r,d,k$, one may ask for the maximal genus
of a curve of degree $d$ in $\mathbb P^r$, not contained in a
hypersurface of a degree $<k$. As far as we know, in this case there
are only rough estimates \cite{NV}, \cite[p. 726, (2.10)]{CCD2}. In
the present paper we examine the case of curves in $\mathbb P^4$ and
$\mathbb P^5$ not contained in quadrics. Our main results are the
following Theorem \ref{bound} and Theorem \ref{boundP5}.

\begin{theorem}\label{bound} Let $C\subset \mathbb P^4$ be a
reduced and irreducible complex curve, of degree $d$ and arithmetic
genus $p_a(C)$, not contained in quadrics (i.e. $h^0(\mathbb
P^4,\mathcal I_C(2))=0$).

\smallskip
$\bullet$\,\,\, If $d> 16$, then
\begin{equation*}\label{G4}
p_a(C)\leq \frac{1}{8}d(d-6)+1.
\end{equation*}
The bound is sharp, and $p_a(C)=\frac{1}{8}d(d-6)+1$ if and only if
$d$ is even, and $C$ is contained in the isomorphic projection in
$\mathbb P^4$ of the Veronese surface.{\footnote{\,The bound is the
genus of a plane curve of degree $d/2$. It should be compared with
the sharp lower bound  $K^2_S\geq -d(d-6)$ and $\chi(\mathcal
O_S)\geq -\frac{1}{8}d(d-6)$, for a smooth projective surface $S$ of
degree $d$ \cite{DGFr}, \cite{DG}.}}

\smallskip
$\bullet$\,\,\,If $d> 143$, and either $d$ is odd or $C$ is
arithmetically Cohen-Macaulay (a.C.M. for short), then
\begin{equation}\label{G44}
p_a(C)\leq
\frac{d^2}{10}-\frac{d}{2}-\frac{1}{10}(\epsilon-4)(\epsilon+1)+\binom{\epsilon}{4}+1,
\end{equation}
where $\epsilon$ is defined by dividing $d-1=5m+\epsilon$, $0\leq
\epsilon \leq 4$. The bound is sharp. Every extremal curve is
a.C.M., and it is contained in a flag like $S\subset T\subset
\mathbb P^4$, where $S$ is a unique surface of degree $5$, and $T$
is a cubic threefold. Moreover, the surface $S$ has sectional genus
$\pi=1$.
\end{theorem}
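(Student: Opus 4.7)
The plan is to apply the Castelnuovo--Halphen strategy: pass to the general hyperplane section $\Gamma = C \cap H \subset \Ps^3$, which by Harris' theorem is a set of $d$ points in uniform position, and use the standard inequality
\[
p_a(C) \leq \sum_{t \geq 1} \bigl(d - h_\Gamma(t)\bigr),
\]
so that bounding $p_a(C)$ reduces to a lower bound on the Hilbert function $h_\Gamma$. The hypothesis $h^0(\ic_C(2)) = 0$ enters through the restriction sequence $0 \to \ic_C(1) \to \ic_C(2) \to \ic_\Gamma(2) \to 0$ in $\Ps^4$, yielding $h^0(\ic_\Gamma(2)) \leq h^1(\ic_C(1))$. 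Uniform position, together with a standard growth lemma for Hilbert functions of points in $\Ps^3$, then propagates this constraint on $h_\Gamma(2)$ forward to all $t$. Summation gives $p_a(C) \leq (d-2)(d-4)/8 = d(d-6)/8 + 1$. The non-linearly-normal case is delicate; my plan is to lift $C$ to its linearly normal model in $\Ps^{4 + h^1(\ic_C(1))}$, run the parallel argument on the lift, and then compare arithmetic genera via the projection. The hypothesis $d > 16$ is used so that the growth lemma has enough slack to propagate.

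For the extremal characterization in the first bound, equality forces $h_\Gamma$ to take the extremal shape, so $\Gamma$ lies on a rational curve of degree four in $\Ps^3$. A Gruson--Peskine-style lifting argument (extending this rational quartic through the family of hyperplane sections, via vanishing of $h^1$ of suitable ideal sheaves) produces a surface $S \subset \Ps^4$ of degree four containing $C$. The classical classification of non-degenerate degree-four surfaces in $\Ps^4$ --- rational normal scrolls of types $(1,3)$ and $(2,2)$, cones over rational quartics, elliptic normal scrolls, and the isomorphic projection of the Veronese surface --- combined with the constraint $h^0(\ic_S(2)) = 0$ inherited from $C$ for $d$ large, selects the projected Veronese as the unique candidate. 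Curves on it are images of plane curves of degree $d/2$, forcing $d$ even. This classification and the uniqueness argument I expect to be the main technical obstacle.

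The tighter second bound is obtained analogously, now with the projected Veronese excluded by hypothesis: odd $d$ rules it out directly, since its curves have even degree; and a.C.M. because a curve $C$ on the projected Veronese of sufficiently large degree has $h^0(\oc_C(1)) \geq 6 > 5$ by Riemann--Roch on the underlying plane curve, hence fails linear normality and so fails a.C.M. One then pushes the Castelnuovo accounting to the next extremal Hilbert-function shape $h_\Gamma(t) = \min\{d, 5t\}$ for $t \geq 1$, which corresponds to $\Gamma$ lying on an elliptic normal quintic in $\Ps^3$. Summing, with $d-1 = 5m + \epsilon$ and $0 \leq \epsilon \leq 4$, produces (\ref{G44}). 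Equality then forces $C$ onto a surface $S \subset \Ps^4$ of degree $5$ and sectional genus $\pi = 1$, itself contained in a cubic threefold $T$ arising from a generator of $H^0(\ic_S(3))$; the a.C.M. property of $C$ follows from its resolution inside $T$.
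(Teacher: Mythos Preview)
Your overall Castelnuovo--Halphen strategy matches the paper's, but there are genuine gaps in both parts.

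\textbf{First part, non-linearly-normal case.} Your plan to lift $C$ to its linearly normal model $\tilde C\subset\Ps^{4+k}$ and ``run the parallel argument'' cannot work as stated: the hypothesis $h^0(\ic_{\tilde C}(2))=0$ is \emph{not} inherited by the lift. Indeed the extremal curves themselves lie on the projected Veronese, have $k=1$, and their lifts lie on the Veronese $V\subset\Ps^5$, which has $h^0(\ic_V(2))=6$. What \emph{could} be salvaged is to apply the plain Castelnuovo bound $G(4+k;d)$ to $\tilde C$; one checks $G(5;d)=\frac{1}{8}d(d-6)+1$ for $d$ even, with equality forcing $\tilde C$ onto a minimal-degree surface in $\Ps^5$, and then one must still argue that degree-$4$ scrolls project into quadrics, leaving only the Veronese. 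The paper avoids all of this: it shows directly (Lemma~\ref{veronese}) that any integral surface of degree $\leq 4$ in $\Ps^4$ not on a quadric is the projected Veronese, and separately that a curve not on any surface of degree $<5$ has $p_a(C)<\frac{1}{8}d(d-6)+1$, using the Eisenbud--Harris bound $\pi_2(d,4)$ for $d>143$ and a case-by-case analysis of $h^0(\ic_\Gamma(2))$, $h^0(\ic_\Gamma(3))$ (invoking the Chiantini--Ciliberto lifting theorem) for $16<d\leq 143$.

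\textbf{Second part.} Your sketch jumps straight to the elliptic-quintic shape without justification, and the shape $h_\Gamma(t)=\min\{d,5t\}$ is wrong at $t=1$. The paper first localizes $C$ on a unique degree-$5$ surface $S$: degree $\leq 4$ is excluded as above, and degree $\geq 6$ because $G(4;d,6)<G_{d,1}$ for $d>143$. One must then rule out sectional genus $\pi=2$ (the Castelnuovo quintic in $\Ps^3$ lies on a quadric which lifts to $S$) and compare the two remaining bounds $G_{d,0}<G_{d,1}$, using Noma's regularity theorem to control $h^1(\ic_\Sigma(i))$ in both cases. Sharpness requires an explicit construction, which the paper carries out on a cone over an elliptic quintic via residuation and a delta-invariant computation at the vertex; your outline does not address this. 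Finally, the a.C.M.\ property of extremal curves comes from equality in $p_a(C)=\sum_i(d-h_\Gamma(i))$, not from any ``resolution inside $T$''.
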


When $d$ is even, in contrast  with the classical case, the extremal
curves are not a.C.M.. Moreover,  the asymptotic behaviour of the
bound  is different, depending on whether $d$ is even or odd. As for
the proof of Theorem \ref{bound}, the first part, when $d\gg 0$,
easily follows combining the geometry of the Veronese surface (see
Lemma \ref{veronese} below), with \cite[p. 117, Theorem (3.22)]{EH}.
In order to establish the bound under the hypothesis $d> 16$, we
need to refine the analysis of the Hilbert function of the general
hyperplane section of $C$, similarly as in \cite[p. 74-75]{DGFr}. It
is a rather long numerical argument, relying on \cite{CCD}. We
relegate it to an appendix at the end of the paper (Section $7$). We
will not insist on this analysis in the other cases (i.e. in second
part of Theorem \ref{bound}, and in next Theorem \ref{boundP5},
Proposition \ref{qboundPr}, and Proposition \ref{cboundPr}). We will
be content with coarser hypotheses on $d$, which can be obtained
without too much effort, simply using \cite[loc. cit.]{EH},
\cite[Corollary 3.11]{DGF}, and \cite[Main Theorem]{CCD}. However,
we think our assumptions on $d$, in the second part of Theorem
\ref{bound} and in next Theorem \ref{boundP5},  can be significantly
improved. We hope to return on this question in a forthcoming paper.
When $d$ is odd, or $C$ is a.C.M., the proof of Theorem \ref{bound}
is more involved, and we need \cite[Theorem 1]{Noma} in order to
extend certain results of \cite{DGF} in the range $r=4$ and $s=5$.


\begin{theorem}\label{boundP5} Let $C\subset \mathbb P^5$ be a
reduced and irreducible complex curve, of degree $d$ and arithmetic
genus $p_a(C)$, not contained in quadrics. Assume $d>215$, and
divide $d-1=6m+\epsilon$, $0\leq\epsilon\leq 5$. Then:
$$
p_a(C)\leq 6\binom{m}{2}+m\epsilon.
$$
The bound is sharp{\,\footnote{\, This is the Castelnuovo's bound
for curves of degree $d$ in $\mathbb P^7$ (see Section $2$, $(v)$,
below).}}. Every extremal curve is not a.C.M., and it is contained
in a flag like $S\subset T\subset \mathbb P^5$, where $S$ is a
unique surface  of degree $6$, and $T$ is a cubic hypersurface of
$\mathbb P^5$. Moreover, $S$ has sectional genus $\pi=0$, and
arithmetic genus $p_a(S)=0$.
\end{theorem}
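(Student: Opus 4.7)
The plan is to reduce Theorem~\ref{boundP5} to the classical Castelnuovo-Halphen bound \cite[p.~117, Theorem~(3.22)]{EH} via the key observation that $h^0(\mathcal{I}_C(2)) = 0$ rules out any surface of degree $\le 5$ containing $C$. The numerical bound $6\binom{m}{2} + m\epsilon$ with $d-1 = 6m+\epsilon$ is exactly the value $G(d,5,6)$ from that theorem, which in turn coincides with the Castelnuovo bound in $\mathbb{P}^7$; this is because the extremal configuration lives on an isomorphic projection to $\mathbb{P}^5$ of a rational normal scroll of degree $6$ in $\mathbb{P}^7$, i.e., on a surface of degree $6$ with $\pi = p_a = 0$.

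The core step is therefore the following: if $C \subset \mathbb{P}^5$ is a non-degenerate integral curve with $h^0(\mathcal{I}_C(2)) = 0$, then $C$ lies on no surface of degree $\le 5$. Passing to the integral component of any such surface through $C$, one reduces to the classical fact that every non-degenerate integral surface $S \subset \mathbb{P}^5$ of degree $4$ or $5$ satisfies $h^0(\mathcal{I}_S(2)) > 0$. Indeed, degree-$4$ surfaces in $\mathbb{P}^5$ (the minimal-degree case) are the Veronese surface and the rational normal scrolls $S(a,b)$ with $a + b = 4$, both with quadric-generated ideal; and degree-$5$ surfaces in $\mathbb{P}^5$ (the next-to-minimal-degree case) -- del Pezzo surfaces, cones, and their classical relatives -- are also all contained in quadrics. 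A quadric through $S$ would then contain $C$, contradicting the hypothesis.

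With this reduction, \cite[Theorem~(3.22)]{EH} applied with $r = 5$, $s = 6$ delivers the inequality $p_a(C) \le 6\binom{m}{2} + m\epsilon$. The explicit threshold $d > 215$ combines the hypotheses needed for that theorem with the ones required by \cite[Corollary~3.11]{DGF} and \cite[Main Theorem]{CCD}, which enter in the extremal analysis. In that analysis, the sharpness part of \cite[Theorem~(3.22)]{EH} forces an extremal $C$ to lie on a unique surface $S$ of degree $6$; an analysis of the Hilbert function of the general hyperplane section $\Gamma = C \cap H \subset \mathbb{P}^4$, combined with \cite[Main Theorem]{CCD}, shows that $\Gamma$ lies on a unique rational normal curve of degree $6$ in $\mathbb{P}^4$. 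Lifting this curve to a surface in $\mathbb{P}^5$ identifies $S$ as the isomorphic projection to $\mathbb{P}^5$ of a rational normal scroll of degree $6$ from $\mathbb{P}^7$, so $\pi(S) = p_a(S) = 0$. The cubic hypersurface $T$ then arises from the degree-$3$ part of the ideal of $S$ (its degree-$2$ part being empty), and the non-a.C.M.\ property of the extremal $C$ is read off from the strict inequality between the first difference of the Hilbert function of $C$ and that of $\Gamma$ at some $n$, a direct consequence of the Hilbert function of maximum-genus curves on $S$.

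The principal obstacle is the careful execution of the reduction step, to be sure that all possible degree-$\le 5$ surfaces through $C$, including singular and cone degenerations, are indeed contained in a quadric hypersurface. A second technical difficulty is the uniqueness and identification of the extremal surface $S$: one needs a Laudal-type lifting to pass from the rigidity of the Hilbert function of $\Gamma \subset \mathbb{P}^4$ to the existence of the required degree-$6$ surface in $\mathbb{P}^5$ through $C$, and this is precisely where the hypothesis $d > 215$ is genuinely used.
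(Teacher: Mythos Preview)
Your reduction step (every integral surface of degree $\leq 5$ in $\mathbb P^5$ lies on a quadric) is correct and matches the paper, though the paper does it in one line via the inequality (\ref{basic4}) rather than by classification. The genuine gap is the next step: the claim that \cite[Theorem~(3.22)]{EH} with $r=5$ and $s=6$ yields $p_a(C)\le 6\binom{m}{2}+m\epsilon$ is false. The Castelnuovo--Halphen bound $G(5;d,6)$ for curves in $\mathbb P^5$ not on a surface of degree $<6$ has the form
\[
G(5;d,6)=\frac{d^2}{12}+\frac{d}{12}\bigl(2G(4;6)-2-6\bigr)+O(1)=\frac{d^2}{12}-\frac{d}{3}+O(1),
\]
whereas $G(7;d)=6\binom{m}{2}+m\epsilon=\frac{d^2}{12}-\frac{2d}{3}+O(1)$. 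Thus $G(5;d,6)-G(7;d)\sim d/3$, and invoking \cite{EH} alone overshoots by a linear term. The reason is that the extremal curves for $G(5;d,6)$ lie on degree-$6$ Castelnuovo surfaces in $\mathbb P^5$ (sectional genus $\pi=2$, a.C.M.), and those surfaces \emph{are} contained in quadrics; the quadric hypothesis excludes them, but \cite{EH} does not know this.

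The paper closes this gap by a two-pronged argument that your proposal omits. If $C$ lies on no surface of degree $\leq 6$, one uses $G(5;d,7)<G(7;d)$ (this is (\ref{es1})). If $C$ lies on a degree-$6$ surface $S$, then (\ref{basic4}) forces $\pi=0$ (since $\pi\geq 1$ already puts $S$ on a quadric), and a further case analysis of $p_a(S)\in\{-3,-2,-1,0\}$ via the Hartshorne--Rao module of $\Sigma=S\cap H$ and the invariants $\mu_i$ of \cite{DGF} shows that $p_a(S)<0$ again forces a quadric through $S$. Only the case $\pi=0$, $p_a(S)=0$ survives, and there the bound comes from \cite[Corollary~3.11]{DGF} (this is where $d>215$ is used), not from \cite{EH}. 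Your sketch of the extremal analysis is correspondingly off: the identification of $S$ does not proceed by lifting a ``rational normal curve of degree $6$ in $\mathbb P^4$'' (no such curve exists), and the non-a.C.M.\ property follows because an a.C.M.\ $C$ would force $M(S)=0$, hence $h^1(\mathcal I_S(1))=0$, hence a quadric through $S$.
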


The analysis of the arithmetic genus of a surface of degree $6$ in
$\mathbb P^5$ appearing in \cite[Corollary 3.6]{DGF}, combined with
\cite[loc. cit.]{EH}, enables us to state the bound in Theorem
\ref{boundP5}. As for the sharpness, we simply project a general
Castelnuovo's curve in $\mathbb P^7$, which is not contained in
quadrics because so is the general projection of a smooth rational
normal scroll surface \cite[Theorem 2, Lemma 3.1]{BE}.

\medskip
In the case of curves contained in $\mathbb P^r$, for certain values
of $r\geq 7$, we are able to compute the sharp bound except for  a
constant term. We obtain similar results also in the case of cubics.
In fact, we prove the following Proposition \ref{qboundPr} and
Proposition \ref{cboundPr}. For the definition of the number
$d_1(r,s)$, appearing in the claims, see Section $(2)$, $(vi)$, and
(\ref{es2}) below.

\begin{proposition}\label{qboundPr}
Fix an integer $r\geq 7$, not divisible by $3$, and divide
$$
\binom{r+2}{2}=3h+k, \quad 0\leq k\leq 2.
$$
Set: \footnote{{\, The number $s$ is the minimal integer such that
$\binom{r+2}{2}-3(s+1)$ is $\leq 0$ (e.g. $(r,s)=(7,11),\, (8,14),\,
(10, 21)$). The number $r$ is not divisible by $3$  if and only if
$k\neq 1$.}}
$$
s=\begin{cases} h-1 \quad {\text{if $k=0$}}\\h \quad {\text{if
$k=2$}}.
\end{cases}
$$
Let $C\subset \mathbb P^r$ be a reduced and irreducible complex
curve, of degree $d$ and maximal arithmetic genus $p_a(C)$ with
respect to the conditions of being of degree $d$ and not contained
in a quadric hypersurface. Assume $d>d_1(r,s)$, and divide
$d-1=ms+\epsilon$, $0\leq\epsilon\leq s-1$. Then:
\begin{equation}\label{bqs}
p_a(C)=\frac{d^2}{2s}-\frac{d}{2s}(s+2)+O(1), \quad {\text{with
$0<O(1)\leq \frac{s^3}{r-2}$}.}
\end{equation}
Moreover, $C$ is not a.C.M., and it is contained in a flag like
$S\subset T\subset \mathbb P^r$, where $S$ is a unique surface  of
degree $s$, and $T$ is a cubic hypersurface of $\mathbb P^r$. The
surface $S$ has  sectional genus $\pi=0$.
\end{proposition}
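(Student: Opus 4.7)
The plan is to adapt the strategy of Theorem \ref{boundP5} to the range $r \geq 7$ with $r \not\equiv 0 \pmod 3$, using the same three building blocks: \cite[Theorem 3.22]{EH}, \cite[Corollary 3.11]{DGF}, and \cite[Theorem 2, Lemma 3.1]{BE}. I would first exhibit an extremal curve, then prove the matching upper bound, and finally deduce the flag structure and the non-a.C.M. property.

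For sharpness, I would take a general Castelnuovo curve $C_0$ of degree $d$ on the smooth rational normal scroll $S_0 \subset \mathbb{P}^{s+1}$ of degree $s$: its arithmetic genus equals $s\binom{m}{2} + m\epsilon$, which, using $d - 1 = ms + \epsilon$, expands to $\frac{d^2}{2s} - \frac{d(s+2)}{2s}$ plus a strictly positive constant. Because $s + 1 > r$ in every admissible pair $(r,s)$, I can project $S_0$ (and hence $C_0$) generically into $\mathbb{P}^r$. By \cite[Theorem 2, Lemma 3.1]{BE}, a general projection of a smooth rational normal scroll surface is not contained in any quadric, so $h^0(\mathbb{P}^r, \mathcal{I}_C(2)) = 0$; the projected surface $S$ has degree $s$, sectional genus $\pi = 0$, and lies on a cubic by the numerical definition of $s$.

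For the upper bound, the key arithmetic role of $s$ is that it is the minimal integer with $\binom{r+2}{2} - 3(s+1) \leq 0$. Interpreting $3(s+1)$ as the Hilbert function in degree $2$ of a scroll of degree $s+1$ with $\pi = 0$, this inequality forces any surface of degree $> s$ in $\mathbb{P}^r$ to saturate the space of quadrics, so no curve lying on such a surface can satisfy $h^0(\mathcal{I}_C(2)) = 0$; hence $C$ lies on a surface of degree $\sigma \leq s$. Combining \cite[Theorem 3.22]{EH} with \cite[Corollary 3.11]{DGF} and \cite[Main Theorem]{CCD} produces a bound in $(\sigma, \pi)$ that is maximized precisely at $(s, 0)$ and rearranges into (\ref{bqs}). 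Uniqueness of $S$ follows as in Theorem \ref{boundP5}: two independent sections of $\mathcal{I}_C(s)$ would cut $C$ down to degree below $d$ by B\'ezout. The existence of the cubic $T$ is forced by the non-vanishing of $h^0(\mathcal{I}_S(3))$, itself guaranteed by the definition of $s$, and $C$ fails to be a.C.M. because any extremal $S$ is a non-trivial projection of a rational normal scroll from $\mathbb{P}^{s+1}$.

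The main obstacle is the quantitative bound $0 < O(1) \leq s^3/(r-2)$ on the error term. This requires careful control of the discrepancy between the Castelnuovo bound for $C_0 \subset S_0 \subset \mathbb{P}^{s+1}$ and the attainable genus after projection to $\mathbb{P}^r$: the $(s-r)$-dimensional projection center imposes extra conditions on the linear series cut out on $C$, and bounding their contribution uniformly in $d$ (rather than through the finer case analysis of Section $7$) is where the numerical work concentrates. The hypothesis $d > d_1(r,s)$ is precisely what allows this error to be absorbed into a term bounded by $s^3/(r-2)$, sweeping aside finitely many low-degree anomalies.
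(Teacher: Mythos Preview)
Your argument for why $C$ must lie on a surface of degree $\leq s$ is inverted. The inequality $\binom{r+2}{2}\leq 3(s+1)$ says that for a degree-$s$ scroll (not $s+1$: by (\ref{scroll}) one has $h^0(\mathcal O_S(2))=3(1+s)$) the restriction map on quadrics can be injective, i.e.\ $S$ is \emph{not} on a quadric; it says nothing restrictive about surfaces of degree $>s$. What the definition of $s$ actually gives is $\binom{r+2}{2}-(3+3(s-1))>0$, which via (\ref{basic4}) forces every surface of degree $<s$ to lie on a quadric. This only rules out the small-degree side. To rule out the other side you need a different mechanism: if $C$ lies on no surface of degree $\leq s$, then $p_a(C)\leq G(r;d,s+1)$, and the paper's key inequality (\ref{es1}), $G(r;d,s+1)<G(s+1;d)$, shows this is strictly smaller than the genus of the projected Castelnuovo curve you constructed. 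Maximality of $p_a(C)$ then forces $C$ onto a surface of degree exactly $s$. You never invoke this comparison, and without it there is no reason $C$ should sit on any surface at all. The assumption $k\neq 1$ is needed at a separate point you do not isolate: once $\deg S=s$, (\ref{basic4}) with $\pi\geq 1$ still yields a quadric through $S$ only because $\binom{r+2}{2}-(1+3s)>0$, which fails when $k=1$.

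Two further gaps. First, \cite[Corollary 3.11]{DGF} does not apply here: as the introduction warns, for $r\geq 7$ the degree $s$ falls outside the range of \cite[Theorem 2.2]{DGF}, and the upper bound with error $\leq s^3/(r-2)$ comes instead from \cite[Lemma]{PAMS}. This is a citation, not a computation about projection centers; your description of the ``main obstacle'' misidentifies where the work lies. Second, your non-a.C.M.\ argument assumes that the extremal surface $S$ is a projected scroll, but you have only shown $\deg S=s$ and $\pi=0$; nothing guarantees $S$ arises this way. The paper's argument is intrinsic: if $C$ were a.C.M.\ then (Section~2, $(vii)$) $M(S)=0$, hence $h^1(\mathcal I_S(1))=0$, hence quadrics through the hyperplane section $\Sigma$ lift to $S$; but $\Sigma\subset\mathbb P^{r-1}$ has degree $s$ and $\pi=0$, so by (\ref{basic}) and (\ref{basic3}) it lies on a quadric, contradicting $h^0(\mathcal I_S(2))=0$.
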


\begin{proposition}\label{cboundPr}
Fix an integer $r\geq 9$, and assume that the number
$$
s:=\frac{1}{6}\left[\binom{r+3}{3}-4\right]
$$
is an integer.{\footnote{\, This is equivalent to say that the class
$[r]$ of $r$ modulo $36$ is one of the following classes: $[1]$,
$[2]$, $[9]$, $[10]$, $[11]$, $[18]$, $[19]$, $[27]$, $[29]$ (e.g.
$(r,s)=(9,36), (10,47), (11,60), (18,221), (19, 256)$).}} Let
$C\subset \mathbb P^r$ be a reduced and irreducible complex curve,
of degree $d$ and maximal arithmetic genus $p_a(C)$ with respect to
the conditions of being of degree $d$ and not contained in a cubic
hypersurface. Assume $d>d_1(r,s)$, and divide $d-1=ms+\epsilon$,
$0\leq\epsilon\leq s-1$. Then:
\begin{equation}\label{bqs1}
p_a(C)=\frac{d^2}{2s}-\frac{d}{2s}(s+2)+O(1), \quad {\text{with
$0<O(1)\leq \frac{s^3}{r-2}$}.}
\end{equation}
Moreover, $C$ is not a.C.M., and it is contained in a flag like
$S\subset T\subset \mathbb P^r$, where $S$ is a unique surface  of
degree $s$, and $T$ is a quartic hypersurface of $\mathbb P^r$. The
surface $S$ has sectional genus $\pi=0$.
\end{proposition}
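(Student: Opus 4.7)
The plan is to adapt the scheme used for Proposition \ref{qboundPr}, replacing the role of quadrics by cubics throughout. Let $\Gamma \subset \mathbb{P}^{r-1}$ be a general hyperplane section of $C$, a set of $d$ points in uniform position. From $h^0(\mathcal{I}_C(3)) = 0$ and the standard exact sequence $0 \to \mathcal{I}_C(2) \to \mathcal{I}_C(3) \to \mathcal{I}_\Gamma(3) \to 0$, together with its analogues for the iterated hyperplane sections, one gets strong constraints on the Hilbert function of $\Gamma$ in degree $3$. Combining these with the Main Theorem of \cite{CCD} and \cite[Corollary 3.11]{DGF}, I would deduce that, for $d > d_1(r,s)$, either $p_a(C)$ is strictly smaller than the target asymptotic $\frac{d^2}{2s}-\frac{d}{2s}(s+2)$, or else $\Gamma$ is contained in the generic hyperplane section of a surface $S \subset \mathbb{P}^r$ of degree exactly $s$ with sectional genus $\pi = 0$.

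The choice $s = \frac{1}{6}\bigl[\binom{r+3}{3}-4\bigr]$ is precisely the critical degree where a rational normal scroll $S$ of degree $s$ in $\mathbb{P}^r$ (necessarily with $\pi = 0$ and $p_a(S) = 0$) fails to lie on any cubic, while still lying on quartics; this is the numerical content of the identity $\binom{r+3}{3} = 6s+4$. A Castelnuovo-type calculation for divisors of class $aH - bF$ on such a scroll, with $d = as - b$ and $p_a = s\binom{a}{2} + (a-1)b$, then produces the claimed asymptotic $\frac{d^2}{2s}-\frac{d}{2s}(s+2)+O(1)$, with the explicit bound $0<O(1)\leq \frac{s^3}{r-2}$ of (\ref{bqs1}). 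Sharpness is obtained by projecting from a general center a smooth extremal curve on such a scroll, using the analogue of \cite[Theorem 2, Lemma 3.1]{BE} to ensure the projected surface (hence the projected curve) remains outside every cubic.

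To lift from $\Gamma$ to $C$, and to produce the flag $S \subset T \subset \mathbb{P}^r$, I would invoke \cite[Theorem 1]{Noma} (as used in the proofs of Theorems \ref{bound} and \ref{boundP5}) together with \cite[loc.\,cit.]{EH} to guarantee the existence of the surface $S$ of degree $s$ containing $C$. Uniqueness of $S$ follows because two distinct surfaces of degree $s$ would intersect in a curve of degree at most $s^2$, contradicting $d > d_1(r,s) \gg s^2$. Existence of the quartic $T$ is then automatic: $S$ itself lies on some quartic by the numerical choice of $s$, while neither $S$ nor $C$ lies on a cubic by hypothesis. Non-a.C.M.~character of $C$ follows because an a.C.M.~curve reaching the predicted genus would force a cubic minimal generator of $\mathcal{I}_C$, contradicting $h^0(\mathcal{I}_C(3)) = 0$.

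The main obstacle will be the first step: ruling out the possibility that $\Gamma$ lies only on surfaces of degree strictly less than $s$. For each such $s' < s$ one must show that the corresponding Castelnuovo-Halphen bound \cite[Theorem (3.22)]{EH}, combined with the cubic-avoidance constraint, produces a genus strictly smaller than $\frac{d^2}{2s}-\frac{d}{2s}(s+2)$. Since the author explicitly relaxes the optimal hypothesis on $d$ here, the sizing of $d_1(r,s)$ is chosen precisely so that this numerical comparison goes through from \cite[loc.\,cit.]{EH} alone, without the delicate Hilbert-function refinements carried out for Theorem \ref{bound}.
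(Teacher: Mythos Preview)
Your proposal has the right ingredients but the architecture is inverted, and this leads you to overcomplicate exactly the step that is trivial while underspecifying the one that matters.

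The paper does \emph{not} rule out surfaces of degree $<s$ by a genus comparison. It rules them out immediately: by (\ref{basic4}) with $i=3$ and $\pi\geq 0$, any surface of degree $s'<s$ in $\mathbb P^r$ satisfies $h^0(\mathcal I_S(3))\geq \binom{r+3}{3}-(4+6s')>\binom{r+3}{3}-(4+6s)=0$, hence lies on a cubic. Since $C$ avoids all cubics, $C$ cannot lie on such a surface. So your ``main obstacle'' is a one-line consequence of the definition of $s$, and your proposed case-by-case Castelnuovo--Halphen comparison for each $s'<s$ is unnecessary. The genuine dichotomy is instead: either $C$ lies on a surface of degree exactly $s$, or on no surface of degree $\leq s$; in the second case the paper invokes (\ref{es1}), i.e.\ $p_a(C)\leq G(r;d,s+1)<G(s+1;d)$, which already undercuts the target asymptotic. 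This is where the sizing $d>d_1(r,s)$ is used.

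Two further points. First, there is no lifting from $\Gamma$ to $C$ and no use of \cite{Noma} here: the argument works directly with $C$ and the surface $S$, and once $C$ is forced onto a degree-$s$ surface, (\ref{basic4}) again (now with $\pi\geq 1$) shows $\pi=0$, after which the upper bound comes from \cite[Lemma]{PAMS}, not from a direct scroll computation. Second, your non-a.C.M.\ argument (``would force a cubic minimal generator'') is not how the paper proceeds and is not obviously correct as stated. The paper's argument (carried over from Proposition \ref{qboundPr}) is: if $C$ were a.C.M., then by Section~2~$(vii)$ one gets $M(S)=0$; in particular $h^1(\mathcal I_S(2))=0$, so the restriction map $H^0(\mathcal I_S(3))\to H^0(\mathcal I_{\Sigma,\mathbb P^{r-1}}(3))$ is onto; but $\Sigma$ lies on a cubic (by (\ref{basic}) and (\ref{basic3})) while $S$ does not, a contradiction.
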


The proof follows the same line of Theorem \ref{boundP5}. However,
this analysis, when $r\geq 7$, leads to examine surfaces whose
degree is out of the range considered in \cite[Theorem 2.2]{DGF}. We
partly overcome this difficulty, using an estimate appearing in
\cite[Lemma]{PAMS}. This explains why we are not able to determine
the sharp bound (i.e. the constant $O(1)$).

\bigskip
\section{Notations and preliminary remarks.}

$\bullet$ $(i)$ For a projective subscheme $V\subseteq \mathbb P^N$
we will denote by $\mathcal I_V=\mathcal I_{V,\mathbb P^N}$ its
ideal sheaf in $\mathbb P^N$, and by $M(V):=\oplus_{i\in\mathbb
Z}H^1(V,\mathcal I_V(i))$ the Hartshorne-Rao module of $V$. We will
denote by $h_V$ the Hilbert function of $V$, and by $\Delta h_V$ the
first difference of $h_V$, i.e. $\Delta h_V(i)=h_V(i)-h_V(i-1)$.
Observe that
\begin{equation}\label{basic}
h_V(i)=h^0(\mathbb P^N,\mathcal O_{\mathbb P^N}(i))-h^0(\mathbb
P^N,\mathcal I_V(i))\leq h^0(V,\mathcal O_V(i)).
\end{equation}
We denote by $p_a(V)$ the arithmetic genus of $V$. We say that $V$
is {\it arithmetically Cohen-Macaulay} (shortly a.C.M.) if all the
restriction maps $H^0(\mathbb P^N, \mathcal O_{\mathbb P^N}(i))\to
H^0(V,\mathcal O_V(i))$ ($i\in\mathbb Z$) are surjective, and
$H^j(V,\mathcal O_V(i))=0$ for all $i\in\mathbb Z$ and $1\leq j\leq
\dim V-1$.  If $V$ is one-dimensional of degree $d$, and $V'$
denotes its general hyperplane section, then \cite[p. 83-84]{EH}:
\begin{equation}\label{basic2}
\Delta h_V(i)\geq h_{V'}(i) \,\,{\text {for every $i$}},
\end{equation}
and
\begin{equation}\label{basic2.1}
p_a(V)\leq \sum_{i=1}^{+\infty} d-h_{V'}(i).
\end{equation}
Moreover, the equality occurs in (\ref{basic2}) (resp. in
(\ref{basic2.1})) if and only if $V$ is a.C.M.. If $V$ is integral
(i.e. reduced and irreducible) and one-dimensional, we also have
\cite[p. 86-87, Corollary (3.5) and (3.6)]{EH}:
\begin{equation}\label{basic2.2}
h_{V'}(i+j)\geq \min\{d,\, h_{V'}(i)+h_{V'}(j)-1\} \,\,{\text {for
every $i,j$}},
\end{equation}
and
\begin{equation}\label{basic2.3}
h_{V'}(i)\geq \min\{d,\, i(r-1)+1\} \,\,{\text {for every $i$}}.
\end{equation}
If $V$ is integral, and $\dim V\geq 2$, then $V$ is a.C.M. if and
only if its general hyperplane section is.

\medskip
$\bullet$ $(ii)$ Let $\Sigma\subset \mathbb P^{r-1}$ be an integral
curve of degree $s$ in $\mathbb P^{r-1}$, $r\geq 4$, and
$\Sigma'\subset \mathbb P^{r-2}$  its general hyperplane section.
For every $i$ we have: $ h^0(\Sigma, \mathcal
O_{\Sigma}(i))=1-\pi+is+ h^1(\Sigma, \mathcal O_{\Sigma}(i)),$ where
$\pi$ denotes the arithmetic genus of $\Sigma$. From the exact
sequence $ 0\to\mathcal O_{\Sigma}(i-1)\to \mathcal O_{\Sigma}(i)\to
\mathcal O_{\Sigma'}(i)\to 0$ we get the exact sequence $ 0\to
H^{0}(\Sigma,\mathcal O_{\Sigma}(i-1))\to H^{0}(\Sigma,\mathcal
O_{\Sigma}(i))\to H^{0}(\Sigma',\mathcal O_{\Sigma'}(i)) \to
H^{1}(\Sigma,\mathcal O_{\Sigma}(i-1))\to H^{1}(\Sigma,\mathcal
O_{\Sigma}(i))\to 0. $ We deduce that $h^{1}(\Sigma,\mathcal
O_{\Sigma}(i))\leq h^{1}(\Sigma,\mathcal O_{\Sigma}(i-1)).$ In
particular, when $i\geq 1$, we have $ h^{1}(\Sigma,\mathcal
O_{\Sigma}(i))\leq \pi=h^{1}(\Sigma,\mathcal O_{\Sigma}).$ Observe
that if, for some $i\geq 1$, one has $h^{1}(\Sigma,\mathcal
O_{\Sigma}(i))=\pi$, then $\pi=0$. In fact, in this case, we have
$h^{1}(\Sigma,\mathcal O_{\Sigma}(1))= \pi$, hence $h^0(\Sigma,
\mathcal O_{\Sigma}(1))=1+s$. This means that $\Sigma$ is contained
in $\mathbb P^{s}$ as a non-degenerate curve of degree $s$, so
$\pi=0$. Similarly, if for some $i\geq 1$, one has
$h^{1}(\Sigma,\mathcal O_{\Sigma}(i))=\pi-1$, then $\pi=1$. In fact,
in this case, we have $h^{1}(\Sigma,\mathcal O_{\Sigma}(1))= \pi-1$,
hence $h^0(\Sigma, \mathcal O_{\Sigma}(1))=s$. This means that
$\Sigma$ is contained in $\mathbb P^{s-1}$ as a non-degenerate curve
of degree $s$. By Castelnuovo's bound, it follows that  $\pi=1$. In
conclusion, since $ h^0(\Sigma, \mathcal O_{\Sigma}(i))=
1-\pi+is+h^1(\Sigma, \mathcal O_{\Sigma}(i))$, for every $i\geq 1$
we have:
\begin{equation}\label{basic3}
h^0(\Sigma, \mathcal O_{\Sigma}(i))\leq
\begin{cases} 1+is \quad {\text{if $\pi\geq 0,$}}\\
is \quad {\text{if $\pi\geq 1,$}}\\
-1+is \quad {\text{if $\pi\geq 2$}}.
\end{cases}
\end{equation}

\medskip
$\bullet$ $(iii)$ Let $S\subset \mathbb P^r$ ($r\geq 4$) be an
integral surface of degree $s$, and  $\Sigma$ its general hyperplane
section. From the exact sequence $ 0\to\mathcal O_{S}(i-1)\to
\mathcal O_{S}(i)\to \mathcal O_{\Sigma}(i)\to 0$ we get, for every
$i\geq 1$, $ h^{0}(S,\mathcal O_{S}(i))\leq
\sum_{j=0}^{i}h^{0}(\Sigma,\mathcal O_{\Sigma}(j)).$ Hence, for
every $i\geq 1$, we have (compare with (\ref{basic})) $ h^0(\mathbb
P^r,\mathcal I_S(i))\geq
\binom{r+i}{i}-\sum_{j=0}^{i}h^{0}(\Sigma,\mathcal O_{\Sigma}(j)).$
Combining with (\ref{basic3}), it follows that:
\begin{equation}\label{basic4}
h^0(\mathbb P^r,\mathcal I_S(i))\geq
\begin{cases} \binom{r+i}{i}-\left[i+1+\binom{i+1}{2}s\right] \quad {\text{if $\pi\geq 0,$}}\\
\binom{r+i}{i}-\left[1+\binom{i+1}{2}s\right] \quad {\text{if $\pi\geq 1,$}}\\
\binom{r+i}{i}-\left[1-i+\binom{i+1}{2}s\right] \quad {\text{if
$\pi\geq 2$}}.
\end{cases}
\end{equation}

\medskip
$\bullet$ $(iv)$ Let $S\subset \mathbb P^{s+1}$ be a non-degenerate
smooth rational normal scroll surface, of minimal degree $s$. Fix an
integer $k\geq 1$. Since $S$ is a.C.M., we have $h^1(S,\mathcal
O_S(k))=0$. Moreover, one has $K_{S}=-2\Sigma+(s-2)W$, where
$\Sigma$ is the general hyperplane section, and $W$ the ruling.
Therefore, one has $h^2(S,\mathcal O_S(k))=h^0(S, K_S-k\Sigma)=0$.
By Riemann-Roch Theorem it follows that: $ h^0(S,\mathcal
O_S(k))=1+\frac{1}{2}k\Sigma\cdot(k\Sigma-K_S)=k+1+\binom{k+1}{2}s.
$ In particular:
\begin{equation}\label{scroll}
h^0(S,\mathcal O_S(2))=3(1+s),\quad {\text{and}}\quad h^0(S,\mathcal
O_S(3))=4+6s.
\end{equation}

\medskip
$\bullet$ $(v)$ Fix integers $s\geq 2$ and $d\geq s+1$, and divide
$d-1=ms+\epsilon$, $0\leq \epsilon\leq s-1$. The number
$$
G(s+1;d):=\binom{m}{2}s+m\epsilon
$$
is the celebrated Castelnuovo's bound for the genus of a
non-degenerate integral curve of degree $d$ in $\mathbb P^{s+1}$
\cite[p. 87, Theorem (3.7)]{EH}. Observe that
\begin{equation}\label{b1}
G(s+1;d)=\frac{d^2}{2s}+\frac{d}{2s}(-s-2)+\frac{1+\epsilon}{2s}(s+1-\epsilon)\leq
\frac{d^2}{2s}.
\end{equation}

\medskip
$\bullet$ $(vi)$ Fix integers $r$, $d$ and $s$, with $s\geq r-1\geq
2$. Denote by $G(r ; d, s)$ the maximal arithmetic genus for an
integral non-degenerate projective curve $C\subset\mathbb P^r$ of
degree $d$, not contained in any surface of degree $< s$ \cite{CCD}.
Put
$$
d_0(r,s):=\frac{2s}{r-2}\prod_{i=1}^{r-2}[(r-1)!s]^{\frac{1}{r-1-i}}.
$$
By \cite[Main Theorem]{CCD} and  \cite[Lemma]{PAMS}, one knows that,
for $d>d_0(r,s)$, the number $G(r;d,s)$ has the following form:
\begin{equation}\label{b2}
G(r;d,s)=\frac{d^2}{2s}+\frac{d}{2s}[2G(r-1;s)-2-s]+R,\quad
{\text{with}}\quad  |R|\leq \frac{s^3}{r-2}.
\end{equation}
When $r=4$ and $s=6$, this holds true also for $d>143$, and when
$r=5$ and $s=7$, this holds true also for $d>179$ \cite[Theorem
(3.22), p. 117]{EH}. Moreover, taking into account (\ref{b2}), and
that $G(r-1;s+1)\leq \frac{(s+1)^2}{2(r-2)}$ (compare with
(\ref{b1})), an elementary computation, which we omit, shows that,
for
\begin{equation}\label{es2}
d>d_1(r,s):=\max\left\{d_0(r,s),\, \frac{4s}{r-2}(s+1)^3\right\},
\end{equation}
one has
\begin{equation}\label{es1}
G(r;d,s+1)<G(s+1;d).
\end{equation}

\medskip
$\bullet$ $(vii)$ Let $S\subset \mathbb P^r$ be an integral
non-degenerate projective surface of degree $s\geq r-1\geq 2$.
Denote by $\sigma$ the integer part of the number
$$
(s-r+2)\left(\frac{s^2}{2(r-2)}+1\right)+1.
$$
By \cite[Lemma 3.8]{DGF} we know that
\begin{equation}\label{reg}
h^1(\mathbb P^r,\mathcal I_S(i))=0 \quad{\text{for every $i\geq
\sigma$.}}
\end{equation}
Now, suppose there exists an a.C.M. curve $C$ on $S$ of degree $d$,
with $d-1=ms+\epsilon$, $0\leq \epsilon\leq s-1$, and $m\geq
\sigma$. Let $\Sigma$ and $\Gamma$ be the general hyperplane
sections of $S$ and $C$. By Bezout's theorem we have $h_S(i)=h_C(i)$
and $h_{\Sigma}(i)=h_{\Gamma}(i)$ for every $i\leq m$. Since $C$ is
a.C.M.,  we have $\Delta h_C(i)=h_{\Gamma}(i)$ for every $i$
(compare with  (\ref{basic2}) and three lines below). It follows
that $\Delta h_S(i)=h_{\Sigma}(i)$ for every $i\leq m$. Therefore,
since
\begin{equation}\label{mui}
\mu_i:=\Delta h_S(i)-h_{\Sigma}(i)=\dim_{\mathbb
C}\left[\ker\left(H^1(\mathbb P^r,\mathcal I_S(i-1))\to H^1(\mathbb
P^r,\mathcal I_S(i))\right)\right]
\end{equation}
(see \cite[Lemma 3.4 and Remark 3.5]{DGF}), we get $H^1(\mathbb
P^r,\mathcal I_S(i-1))\subseteq H^1(\mathbb P^r,\mathcal I_S(i))$
for every $i\leq m$. Combining with (\ref{reg}), it follows that
$M(S)=0$. Summing up: {\it if a surface $S$ contains an a.C.M. curve
of degree $d$ with $m\geq \sigma$, then $H^1(\mathbb P^r,\mathcal
I_S(i))=0$ for every $i\in\mathbb Z$} (compare with \cite[Remark
3.10, $(i)$]{DGF}). Notice that the condition $m\geq \sigma$ is
satisfied when $r=5$ and $s=6$ and $d>179$, or when $d>d_1(r,s)$.

\bigskip
\section{The proof of Theorem \ref{bound}.}

We start with the proof of the first part of  Theorem \ref{bound}.
We need the following Lemma \ref{veronese}, which is certainly well
known. We prove it for lack of a suitable reference.

\begin{lemma}\label{veronese} {\it Let $V\subset \mathbb P^5$ be the Veronese surface.
Let ${\text{Sec}}(V)$ be the secant variety of $V$, $x\in \mathbb
P^5\backslash {\text{Sec}}(V)$ be a point. Then the projection in
$\mathbb P^4$ of $V$ from $x$ is a surface of degree $4$, not
contained in quadrics. Conversely, every integral surface of degree
$4$ in $\mathbb P^4$ not contained in quadrics is the projection of
$V$ from a point $x\in \mathbb P^5\backslash {\text{Sec}}(V)$.}
\end{lemma}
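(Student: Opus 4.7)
The plan is to treat the two directions separately, using the Bertini-Del Pezzo classification of surfaces of minimal degree together with a dimension-count inside the space of quadrics through $V$.

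For the forward direction, since $\tau(V)\subset\text{Sec}(V)$, the hypothesis $x\notin\text{Sec}(V)$ excludes $x$ from every secant or tangent line of $V$, so $\pi_x:V\to\pi_x(V)\subset\Ps^4$ is an isomorphism and $\deg\pi_x(V)=4$. The substantive claim is $h^0(\Ps^4,\mathcal I_{\pi_x(V)}(2))=0$. Pulling back along $\pi_x$, quadrics of $\Ps^4$ containing $\pi_x(V)$ correspond to quadrics of $\Ps^5$ that contain $V$ and are singular at $x$, so the statement reduces to the injectivity of the $1$-jet evaluation
\[
\phi_x:H^0(\Ps^5,\mathcal I_V(2))\longrightarrow J^1_x\mathcal O_{\Ps^5}(2)\cong\bC^6.
\]
Both sides have dimension $6$; on the source, $V$ is projectively normal with $h^0(V,\mathcal O_V(2))=15$, giving $h^0(\mathcal I_V(2))=21-15=6$. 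Identifying $\Ps^5=\Ps(\text{Sym}^2\bC^3)$ so that $V$ is the rank-one locus and $\text{Sec}(V)$ is the determinantal cubic, the complement $\Ps^5\setminus\text{Sec}(V)$ is a single $GL_3$-orbit under congruence; by $GL_3$-equivariance, $\dim\ker\phi_x$ is constant on this orbit, so I would check injectivity at a single point, say $x=[I_3]$, by computing the determinant of the $6\times 6$ matrix of gradients of the six $2\times 2$ minors of the generic symmetric matrix at $I_3$.

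For the converse, let $S\subset\Ps^4$ be integral of degree $4$ with $h^0(\mathcal I_S(2))=0$. Applying (\ref{basic4}) with $r=s=4$, $i=2$ forbids $\pi\geq 1$ (it would give $h^0(\mathcal I_S(2))\geq 2$), so the sectional genus is $0$ and the general hyperplane section $C$ is a rational quartic in $\Ps^3$ with $h^0(C,\mathcal O_C(1))=5$. Chaining the hyperplane sequences with (\ref{basic3}) yields $h^0(\mathcal O_S(2))\leq 1+5+9=15$, while $h^0(\mathcal I_S(2))=0$ gives $h^0(\mathcal O_S(2))\geq 15$, hence equality; an analogous degree-$1$ argument, together with a Riemann-Roch computation ruling out the irregularity-$1$ linearly-normal case (which would put $S$ in a quadric), gives $h^0(\mathcal O_S(1))=6$. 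Therefore $S=\pi_x(\tilde S)$ for some linearly normal surface $\tilde S\subset\Ps^5$ of degree $4$.

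By Bertini-Del Pezzo, $\tilde S$ is a smooth rational normal scroll, a cone over a rational normal quartic, or the Veronese surface $V$. For a smooth scroll, $\text{Sec}(\tilde S)=\Ps^5$, so no $\pi_x$ is an isomorphism; writing $\mathcal C\neq 0$ for the cokernel of $\mathcal O_S\hookrightarrow\pi_{x*}\mathcal O_{\tilde S}$ and using the very ampleness of $\mathcal O_{\tilde S}(2)$, the natural map $H^0(\tilde S,\mathcal O_{\tilde S}(2))\to H^0(\mathcal C(2))$ is nonzero, forcing $h^0(S,\mathcal O_S(2))<h^0(\tilde S,\mathcal O_{\tilde S}(2))=15$ (the latter by (\ref{scroll})), contradicting $h^0(\mathcal O_S(2))=15$. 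For the cone case, $S=\pi_x(\tilde S)$ is itself a cone over a rational quartic in $\Ps^3$, which lies in at least one quadric (since $h^0(\Ps^3,\mathcal O(2))-h^0(\Ps^1,\mathcal O(8))=10-9=1$), again contradicting $h^0(\mathcal I_S(2))=0$. Hence $\tilde S=V$, and since the image has degree $4$, $x\notin\text{Sec}(V)$. I expect the main obstacle to be the injectivity of $\phi_x$: the $GL_3$-equivariance reduces it to a single finite determinant calculation, which is the only genuine computation of the proof, while the rest is a classification-plus-cohomology dichotomy that separates the Veronese from the scrolls and cones through the projection $\pi_x$.
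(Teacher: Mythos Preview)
Your proof is correct, but it follows a genuinely different route from the paper's in both directions.

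\medskip
\textbf{Forward direction.} The paper argues by contradiction on a putative quadric $Q\supset\pi_x(V)$: if $Q$ is smooth, Severi's theorem would force $\pi_x(V)$ to be a complete intersection of two quadrics, hence of sectional genus~$1$; if $Q$ is singular, slicing through the vertex produces a degree-$4$ curve on a quadric cone in $\Ps^3$, again of genus~$1$. Either way the sectional genus~$0$ of the Veronese is contradicted. Your reduction to the injectivity of the $1$-jet map $\phi_x$ is a clean linear-algebraic alternative: the $GL_3$-transitivity on $\Ps^5\setminus\text{Sec}(V)$ legitimately reduces everything to a single $6\times 6$ determinant, and that determinant is indeed nonzero. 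Your approach trades a short geometric case split for an explicit computation; the paper's approach avoids any computation but invokes Severi's theorem.

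\medskip
\textbf{Converse.} The paper shows $\pi=0$ by observing that a sectional Castelnuovo curve ($\pi=1$) would make $S$ a.C.M.\ and lift a quadric from the section; it then cites \cite[Lemma~7]{SD} to identify $S$ as a projection of the Veronese and \cite{MLN} to exclude $x\in\text{Sec}(V)\setminus V$. You instead reconstruct the un-projected surface by computing $h^0(\mathcal O_S(1))=6$, land on a minimal-degree surface $\tilde S\subset\Ps^5$, and run the del Pezzo--Bertini trichotomy, eliminating scrolls via $\text{Sec}(\tilde S)=\Ps^5$ (Severi) and cones via the quadric through a rational quartic in~$\Ps^3$. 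This is more self-contained: it replaces the two external citations by standard classification facts, at the cost of a longer argument.

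\medskip
Two small remarks on the write-up. First, your justification of $h^0(\mathcal O_S(1))=6$ via an ``irregularity-$1$ Riemann--Roch'' detour is opaque; it is cleaner to note directly that the hyperplane sequence gives $15=h^0(\mathcal O_S(2))\leq h^0(\mathcal O_S(1))+h^0(\mathcal O_\Sigma(2))=h^0(\mathcal O_S(1))+9$, forcing $h^0(\mathcal O_S(1))\geq 6$, while the other inequality is your ``analogous degree-$1$ argument''. Second, once you know $\phi:S\to\tilde S$ is a closed immersion (from $\pi_x\circ\phi=\text{id}_S$), the projection $\pi_x|_{\tilde S}$ is already an isomorphism; for a smooth scroll this is immediately incompatible with $\text{Sec}(\tilde S)=\Ps^5$, so the cokernel-sheaf argument is unnecessary.
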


\begin{proof} Let $V'$ be the projection in
$\mathbb P^4$ of $V$ from a point $x\in \mathbb P^5\backslash
{\text{Sec}}(V)$. Suppose there exists  a quadric $Q$ in $\mathbb
P^4$ containing $V'$. If $Q$ were smooth, then, by Severi's theorem,
$V$ would be a complete intersection of two quadrics. This implies
that the sectional genus of $V$ is $1$, in contrast with the fact
that it is $0$. If $Q$ were singular, then $Q$ would be a cone. Let
$H\subset \mathbb P^4$ be a general hyperplane, passing from the
vertex of the cone (if it is a point). Then $H\cap V$ is an integral
curve of degree $4$ of a quadric cone in $\mathbb P^3$. Therefore,
the genus of $H\cap V$ would be $1$, a contradiction, because it is
$0$.

\smallskip
Conversely, let $S$ be a surface of degree $4$ in $\mathbb P^4$ not
contained in quadrics. Let $\pi$ be the sectional genus of $S$. By
Castelnuovo's bound, we have $\pi=0$ either $\pi=1$. If $\pi=1$,
then the general hyperplane section $H\cap S$ of $S$ is a
Castelnuovo's curve in $\mathbb P^3$. This curve is contained in a
quadric, which lifts to a quadric containing $S$ because $H\cap S$
is a.C.M., hence also $S$ is. Therefore, $\pi=0$. In this case, by
\cite[Lemma 7, p. 411]{SD}, we know that $S$ is the projection of
$V$ from some point $x\in \mathbb P^5\backslash V$. Now we observe
that $x\notin {\text{Sec}}(V)\backslash V$, otherwise $S$ is
contained in a quadric \cite[Remark 2.1, p. 60-61]{MLN}.
\end{proof}

We are in position to prove Theorem \ref{bound}, first part.

Let $C\subset \mathbb P^4$ be a curve of degree $d$ not contained in
quadrics. Since a surface of degree $3$ in $\mathbb P^4$ is
contained in a quadric (Section 2, (\ref{basic4})), in view of
previous Lemma \ref{veronese}, it suffices to prove that {\it if $C$
is not contained in a surface of degree $<5$, then
$p_a(C)<\frac{1}{8}d(d-6)+1$}. To this purpose, set
$$G:=\frac{1}{10}d^2-\frac{3}{10}d+\frac{1}{5}+\frac{1}{10}v-\frac{1}{10}v^2+w,
$$
where $v$ is defined by dividing  $d-1=5n+v$, $0\leq v\leq 4$, and
$w:=\max\{0, [\frac{v}{2}]\}$. By \cite[Theorem (3.22), p. 117]{EH}
(with the notation of \cite{EH} one has $G=\pi_{2}(d,4)$), we know
that, if $C$ is not contained in a surface of degree $<5$, and
$d>143$,  then $p_a(C)\leq G$. An elementary computation shows that
$
G<\frac{1}{8}d(d-6)+1
$
for $d>18$. This concludes the proof of the first part of Theorem
\ref{bound}, when $d>143$. We may examine the remaining cases
$16<d\leq 143$ in a similar manner as in \cite[p. 74-75]{DGFr}. For
details, we refer to the Appendix (see Section 7 below).

\medskip
Now we are going to prove Theorem \ref{bound}, second part.

First, we notice that if $d$ is odd, or  $C$ is a.C.M., and $C$ is
not contained in quadrics, then $C$ is not contained in surfaces of
degree $<5$. In fact,  by (\ref{basic4}), every surface in $\mathbb
P^4$ of degree $3$ is contained in a quadric. This is true also for
surfaces of degree $4$, except for an isomorphic projection $S$ of
the Veronese surface (Lemma \ref{veronese}). But on the Veronese
surface every curve has degree even. And every curve on $S$ cannot
be a.C.M. (otherwise,  from the natural sequence: $ 0\to\mathcal
I_S\to \mathcal I_C\to \mathcal O_S(-C)\to 0,$ we would get the
exact sequence:
$
H^0(S, \mathcal O_S(H_S-C))\to H^1(\mathbb P^4, \mathcal I_S(1))\to
H^1(\mathbb P^4, \mathcal I_C(1));
$
this is impossible, because $H^0(S, \mathcal O_S(H_S-C))=0$ and
$H^1(\mathbb P^4, \mathcal I_S(1))\neq 0$).

\medskip
On the other hand, if $C$ is not contained in a surface of degree
$<6$, and $d>143$, then (compare with Section 2, $(vi)$, and
(\ref{b2}))
$$
p_a(C)\leq G(4;d,6)\leq \frac{d^2}{12}+108.
$$
By an elementary comparison, it follows  that, for $d>143$, $p_a(C)$
is strictly less than the bound (\ref{G44}) appearing in our claim.
Therefore, in order to prove the second part of Theorem \ref{bound},
we may assume that $C$ is contained in an integral surface $S\subset
\mathbb P^4$ of degree $5$, not contained in quadrics. Observe that,
by Bezout's theorem, such a surface $S$ is unique, and is contained
in a cubic hypersurface of degree $3$ by (\ref{basic4}).

\smallskip Let $\Sigma\subset \mathbb P^3$ be the general
hyperplane section of $S$. Since $\deg \Sigma=5$, by Castelnuovo's
bound, the arithmetic genus $\pi$ of $\Sigma$ satisfies the
condition $0\leq \pi\leq 2$. The sectional genus $\pi$ cannot be
$2$, otherwise $\Sigma$ should be a Castelnuovo's curve, hence
a.C.M., and contained in a quadric. This would imply that also $S$
is a.C.M., and contained in a quadric (since $h^1(\mathbb
P^4,\mathcal I_{S}(1))=0$, the restriction map $H^0(\mathbb
P^4,\mathcal I_{S}(2))\to H^0(\mathbb P^3,\mathcal I_{\Sigma,\mathbb
P^3}(2))$, induced by the exact sequence $0\to \mathcal I_S(1)\to
\mathcal I_S(2)\to \mathcal I_{\Sigma,\mathbb P^3}(2)\to 0$, is
onto). Therefore, we have $0\leq \pi\leq 1$. Since $\Sigma$ is
non-degenerate, from the exact sequence
$
0\to \mathcal I_{\Sigma,\mathbb P^3}(1)\to \mathcal O_{\mathbb
P^3}(1)\to \mathcal O_{\Sigma}(1)\to 0
$
it follows that $h^1(\mathbb P^3,\mathcal I_{\Sigma,\mathbb
P^3}(1))=2-\pi>0$ (observe that $h^1(\Sigma,\mathcal
O_{\Sigma}(i))=0$ for every $i\geq 1$, because $0\leq \pi\leq 1$).
Moreover, by \cite[Theorem 1]{Noma}, we have
\begin{equation}\label{noma}
h^1(\mathbb P^3,\mathcal I_{\Sigma,\mathbb
P^3}(i-\pi))=0\quad{\text{for every $i\geq 3$}}.
\end{equation}
Let $\Sigma'\subset \mathbb P^2$ be the general plane section of
$\Sigma$. By (\ref{basic2.3}) we have $h_{\Sigma'}(i)=5$ for every
$i\geq 2$. Hence, $h^1(\mathbb P^2,\mathcal I_{\Sigma',\mathbb
P^2}(i))=0$ for every $i\geq 2$, and from the exact sequence $ 0\to
\mathcal I_{\Sigma,\mathbb P^3}(i-1)\to \mathcal I_{\Sigma,\mathbb
P^3}(i)\to \mathcal I_{\Sigma',\mathbb P^2}(i)\to 0$ it follows that
$h^1(\mathbb P^3,\mathcal I_{\Sigma,\mathbb P^3}(2))\leq h^1(\mathbb
P^3,\mathcal I_{\Sigma,\mathbb P^3}(1))$. Summing up, for every
$i\geq 1$, we have:
$$h^1(\mathbb P^3,
\mathcal I_{\Sigma}(i))\leq \max\{0,\, 3-\pi-i\}+\mu(i),$$ with
$\mu(i)=1$ if $\pi=0$ and $i=2$, and  $\mu(i)=0$ otherwise. Now,
divide $d-1=5m+\epsilon$, $0\leq \epsilon\leq 4$, and set (recall
that $0\leq \pi\leq 1$):
$$
h_{d,\pi}(i)=1-\pi+5i-\max\{0,\, 3-\pi-i\}-\mu(i)
$$
for $1\leq i\leq m$, and, for $i\geq m+1$, $ h_{d,\pi}(i)=d$, except
for the case  $\pi=1$, $\epsilon =4$, $i=m+1$, in which we set $
h_{d,\pi}(m+1)=d-1.$ If we denote by $\Gamma$ the general hyperplane
section of $C$, the same analysis appearing in the proof of
\cite[Lemma 3.3]{DGF}, shows that $ h_{\Gamma}(i)\geq h_{d,\pi}(i)$
for every $i\geq 1$. By (\ref{basic2.1}) it follows that
\begin{equation}\label{acm}
p_a(C)\leq \sum_{i=1}^{+\infty} d-h_{\Gamma}(i)\leq
\sum_{i=1}^{+\infty} d-h_{d,\pi}(i)=:G_{d,\pi}.
\end{equation}
Now, observe that
$$
G_{d,0}=5\binom{m}{2}+m\epsilon+4,\quad {\text{and}}\quad
G_{d,1}=5\binom{m}{2}+m(\epsilon+1)+1+\binom{\epsilon}{4}.
$$
Therefore, since $G_{d,0}<G_{d,1}$, and taking into account that
$G_{d,1}$ is exactly the bound appearing in (\ref{G44}), in order to
complete the proof we only have to exhibit examples with
$p_a(C)=G_{d,1}$ (in this case, by (\ref{acm}) and Section 2, $(i)$,
we know that $C$ is a.C.M.).

\smallskip To this purpose, fix an elliptic curve $\Sigma\subset \mathbb P^3$
of degree $5$ (compare with \cite[p. 101-103]{DGF}). By (\ref{noma})
we know that $h^1(\mathbb P^3,\mathcal I_{\Sigma}(2))=0$. It follows
that $\Sigma$ is not contained in quadrics, because $H^0(\mathbb
P^3,\mathcal O_{\mathbb P^3}(2))$ and $H^0(\Sigma,\mathcal
O_{\Sigma}(2))$ have the same dimension, $10$. Let
$S=C(\Sigma)\subset\mathbb P^4$ be the cone over $\Sigma$. Let
$D\subset \Sigma$ be a subset formed by $4-\epsilon$ points. Let
$C(D)\subset C(\Sigma)$ be the cone over $D$. Let $\mu\geq 3$ be an
integer. Let $F\subset \mathbb{P}^4$ be a hypersurface of degree
$\mu+1$ containing $C(D)$,  consisting of $\mu+1$ sufficiently
general hyperplanes. Let $R$ be the residual curve to $C(D)$ in the
complete intersection of $F$ with $S$. Equipped with the reduced
structure, $R$ is a cone over $k$ distinct points of $\Sigma$, with
$k-1=5\mu+\epsilon$. In particular, $R$ is a (reducible) a.C.M.
curve of degree $k$ on $S$, and, if we denote by $R'$ the hyperplane
section of $R$ with the hyperplane $\mathbb P^3\subset\mathbb P^4$
containing $\Sigma$, we have $p_a(R)=\sum_{i=1}^{+\infty}
k-h_{R'}(i)$ (compare with (\ref{basic2.1}) and the first line
below). On the other hand, since $R'\subset \Sigma$, it is clear
that $h_{R'}(i)=h_{k,1}(i)$ for every $i\geq 1$. Hence, we have:
$$
p_a(R)=G_{k,1}=5\binom{\mu}{2}+\mu(1+\epsilon)+1+\binom{\epsilon}{4}.
$$
Now, let $m\gg 0$, and let $G\subset \mathbb{P}^4$ be a hypersurface
of degree $m+1$ containing $C(D)$ such that the residual curve $C$
in the complete intersection of $G$ with $S$, equipped with the
reduced structure, is an integral curve of degree $d=5m+\epsilon+1$,
with a singular point of multiplicity $k$ at the vertex $p$ of $S$,
and tangent cone at $p$ equal to $R$. We are going to prove that $C$
is the curve we are looking for, i.e.
$$p_a(C)=G_{d,1}.$$ To this aim, let $\widetilde{S}$ be the blowing-up of $S$ at the
vertex. By \cite[p. 374]{Hartshorne}, we know that $\widetilde{S}$
is the ruled surface $\mathbb{P}(\mathcal O_{\Sigma}\oplus\mathcal
O_{\Sigma}(-1))\to \Sigma$. Denote by $E$ the exceptional divisor,
by $f$ the line of the ruling, and by $L$ the pull-back of the
hyperplane section. We have $L^2=5$, $L\cdot f=1$, $f^2=0$, $L\equiv
E+5f$ and $K_{\widetilde{S}}\equiv -2L+5f$. Let
$\widetilde{C}\subset \widetilde{S}$ be the blowing-up of $C$ at
$p$, which is nothing but the normalization of $C$. Since $C$ has
degree $d$,  $\widetilde{C}$ belongs to the numerical class of
$(m+1+a)L+(1+\epsilon-5(a+1))f$ for some integer $a$. Moreover
$E\cdot \widetilde{C}=1+\epsilon-5(a+1)=k$, so
$$
a=-\frac{k+4-\epsilon}{5}=-\mu-1.
$$
By the adjunction formula we get
$$
p_a(\widetilde{C})=5\binom{m}{2}+m(\epsilon+1)+1-\frac{5}{2}a^2+a\left(\epsilon-\frac{3}{2}\right).
$$
On the other hand, we have $ p_a(C)=p_a(\widetilde{C})+\delta_p$,
where $\delta_p$ is the delta invariant of the singularity $(C,p)$.
Since the tangent cone of $C$ at $p$ is $R$, the delta invariant is
equal to the difference between the arithmetic genus of $R$ and the
arithmetic genus of $k$ disjoint lines in the projective space, i.e.
$$
\delta_p=p_a(R)-(1-k)=5\binom{\mu}{2}+\mu(1+\epsilon)+1+\binom{\epsilon}{4}-(1-k).
$$
It follows that
$$
p_a(C)=5\binom{m}{2}+m(\epsilon+1)+1-\frac{5}{2}a^2+a\left(\epsilon-\frac{3}{2}\right)
$$
$$
+5\binom{\mu}{2}+\mu(1+\epsilon)+1+\binom{\epsilon}{4}-(1-k).
$$
Taking into account that $a=-\mu-1$, a direct computation proves
that this number is exactly $G_{d,1}$. This concludes the proof of
Theorem \ref{bound}.

\bigskip
\section{The proof of Theorem \ref{boundP5}.}

By (\ref{basic4}), every surface of degree $\leq 5$ in $\mathbb P^5$
is contained in a quadric. Moreover,  if $C$ is not contained in a
surface of degree $<7$, and $d>179$,  then (see Section 2, $(vi)$,
and (\ref{b2})):
$$
p_a(C)\leq G(5;d,7)\leq \frac{d^2}{14}-\frac{3}{14}d+115.
$$
An elementary comparison, relying on (\ref{b1}), shows that this
number is strictly less than $G(7;d)=6\binom{m}{2}+m\epsilon$ for
$d>179$. Therefore, in order to prove Theorem \ref{boundP5}, we may
assume that $C$ is contained in a surface $S$ of degree $6$. Such a
surface is unique by Bezout's theorem, and is contained in a
hypersurface of degree $3$ by (\ref{basic4}). If $\pi$ denotes the
sectional genus of $S$, by Castelnuovo's bound we have $0\leq
\pi\leq 2$, and $\pi$ cannot be equal to $1$ or $2$, otherwise, by
(\ref{basic4}), $S$ is contained in a quadric. It follows that
$\pi=0$.

Let $S\subset \mathbb P^5$ be a surface of degree $6$ and sectional
genus $\pi=0$. By \cite[Corollary 3.6]{DGF} we know that $ -3\leq
p_a(S)\leq 0.$ Now we are going to prove that if $-3\leq p_a(S)\leq
-1$, then $S$ is contained in a quadric.

\smallskip
$\bullet$ If $p_a(S)=-3$, by \cite[Remark 3.7]{DGF} we get
$h^1(\mathbb P^5,\mathcal I_S(1))=0$. Therefore, the map
$H^0(\mathbb P^5,\mathcal I_S(2))\to H^0(\mathbb P^4,\mathcal
I_{\Sigma}(2))$ is onto. This implies that $S$ is contained in a
quadric, because it is so for every curve of degree $6$ in $\mathbb
P^4$ by (\ref{basic}) and (\ref{basic3}).

\smallskip
$\bullet$ Assume $p_a(S)=-2$. Since $\Sigma$ has degree $6$ and
arithmetic genus $0$, by \cite[Proposition 3.1]{DGF} we know that
$h^1(\mathbb P^4,\mathcal I_{\Sigma}(1))=2$, $h^1(\mathbb
P^4,\mathcal I_{\Sigma}(2))\leq 1$, and $h^1(\mathbb P^4,\mathcal
I_{\Sigma}(i))=0$ for $i\geq 3$.

In the case $h^1(\mathbb P^4,\mathcal I_{\Sigma}(2))=0$,  by
\cite[Lemma 3.4]{DGF} we get (compare with (\ref{mui})): $
-2=p_a(S)=-\dim M(\Sigma)
+\sum_{i=1}^{+\infty}\mu_i=-2+\sum_{i=1}^{+\infty}\mu_i.$ Therefore,
$\sum_{i=1}^{+\infty}\mu_i=0$, and so $h^1(\mathbb P^5,\mathcal
I_S(1))=0$. As before, we deduce that $S$ lies on a quadric.

In the case $h^1(\mathbb P^4,\mathcal I_{\Sigma}(2))=1$, we have
$h^0(\mathbb P^4,\mathcal I_{\Sigma}(2))=3$. In view of the exact
sequence $ H^0(\mathbb P^5,\mathcal I_{S}(2))\to H^0(\mathbb
P^4,\mathcal I_{\Sigma}(2))\to H^1(\mathbb P^5,\mathcal I_{S}(1)),$
in order to prove that $S$ lies on a quadric, it suffices to prove
that
\begin{equation}\label{triv}
h^1(\mathbb P^5,\mathcal I_{S}(1))\leq 2.
\end{equation}
This follows from the exact sequence $ 0\to\mathcal I_S\to \mathcal
O_{\mathbb P^5}\to \mathcal O_S\to 0$, taking into account that $
h^0(\mathcal O_S(1))\leq 8,$ in view of the exact sequence $
0\to\mathcal O_S\to \mathcal O_{S}(1)\to \mathcal O_{\Sigma}(1)\to
0.$

\smallskip
$\bullet$ Assume $p_a(S)=-1$. As before, we have two cases,
$h^1(\mathbb P^4,\mathcal I_{\Sigma}(2))=0$, or $h^1(\mathbb
P^4,\mathcal I_{\Sigma}(2))=1$.

If $h^1(\mathbb P^4,\mathcal I_{\Sigma}(2))=0$, then $
-1=p_a(S)=-\dim M(\Sigma)
+\sum_{i=1}^{+\infty}\mu_i=-2+\sum_{i=1}^{+\infty}\mu_i.$ Hence
$\sum_{i=1}^{+\infty}\mu_i=1$. Combining with the exact sequence
$
H^0(\mathbb P^5,\mathcal I_{S}(2))\to H^0(\mathbb P^4,\mathcal
I_{\Sigma}(2))\to H^1(\mathbb P^5,\mathcal I_{S}(1))\to H^1(\mathbb
P^5,\mathcal I_{S}(2)),
$
we get $H^0(\mathbb P^5,\mathcal I_{S}(2))\neq 0$, otherwise the map
on the right should have a kernel of dimension $\geq 2=h^0(\mathbb
P^4,\mathcal I_{\Sigma}(2))$, in contrast with the fact that
$\mu_2\leq 1$.

If $h^1(\mathbb P^4,\mathcal I_{\Sigma}(2))=1$, then $h^0(\mathbb
P^4,\mathcal I_{\Sigma}(2))=3$. Hence, the map $H^0(\mathbb
P^4,\mathcal I_{\Sigma}(2))\to H^1(\mathbb P^5,\mathcal I_{S}(1))$
has non trivial kernel, because $h^1(\mathbb P^5,\mathcal
I_{S}(1))\leq 2$ (see (\ref{triv})). This implies $H^0(\mathbb
P^5,\mathcal I_{S}(2))\neq 0$.

\smallskip
Summing up, in order to prove Theorem \ref{boundP5}, we may assume
that $C$ is contained in a surface $S$ of degree $6$, sectional
genus $\pi=0$, and arithmetic genus $p_a(S)=0$. On such a surface,
the genus of $C$ satisfies the bound in view of \cite[Corollary
3.11]{DGF} (here we need $d>215$). Moreover, $C$ cannot be a.C.M..
Otherwise, $M(S)=0$ (see Section 2, $(vii)$). In particular,
$H^1(\mathbb P^5,\mathcal I_S(1))=0$, and so the restriction map
$H^0(\mathbb P^5,\mathcal I_S(2))\to H^0(\mathbb P^{4},\mathcal
I_{\mathbb P^{4},\Sigma}(2))$ is onto. This is impossible, because
$\Sigma$ is contained in a quadric, and $S$ not.

\smallskip
It remains to prove that the bound is sharp. To this purpose, let
$S'\subset \mathbb P^7$ be a smooth rational normal scroll surface,
of degree $6$. Let $C'$ be a Castelnuovo's curve on $S'$ of degree
$d$. Let $S$ be the general projection of $S'$ in $\mathbb P^5$.
Then the image $C$ of $C'$ is an extremal curve. In fact, $S$ is not
contained in quadrics. To prove this, we use the fact that $S$ is of
maximal rank \cite[Theorem 2, Lemma 3.1]{BE}. In particular, the map
$ H^0(\mathbb P^5,\mathcal O_{\mathbb P^5}(2))\to H^0(S,\mathcal
O_{S}(2))$ is injective or surjective. Since both spaces have the
same dimension (see (\ref{scroll})), it follows that $h^0(\mathbb
P^5,\mathcal I_S(2))=0$. This concludes the proof of Theorem
\ref{boundP5}.

\bigskip
\section{The proof of Proposition \ref{qboundPr}.}

By the definition of $s$, we have $ \binom{r+2}{2}-(3+3(s-1))>0.$
Hence, by (\ref{basic4}), every surface of degree $<s$ is contained
in a quadric. In particular, the extremal curve $C$ cannot be
contained in a surface of degree $<s$. Moreover, if $C$ is not
contained in a surface of degree $<s+1$, then (compare with
(\ref{es2}), (\ref{es1}), and (\ref{b1}))
\begin{equation}\label{pq}
p_a(C)\leq G(r;d,s+1)<
G(s+1;d)=\frac{d^2}{2s}-\frac{d}{2s}(s+2)+\frac{1+\epsilon}{2s}(s+1-\epsilon).
\end{equation}

\medskip
On the other hand, let $S'\subset \mathbb P^{s+1}$ be a smooth
rational normal scroll surface, of degree $s$. Let $D'$ be a
Castelnuovo's curve on $S'$ of degree $d$. Let $S\,(\cong S')$ be
the general projection of $S'$ in $\mathbb P^r$. By \cite[loc.
cit.]{BE}, $S$ is of maximal rank. Since
$$
h^0(\mathbb P^r, \mathcal O_{\mathbb P^r}(2))=\binom{r+2}{2}\leq
3+3s=h^0(S, \mathcal O_{S}(2))
$$
(compare with the definition of $s$ and (\ref{scroll})), it follows
that the restriction map $ H^0(\mathbb P^r, \mathcal O_{\mathbb
P^r}(2))\to H^0(S, \mathcal O_{S}(2))$ is injective. Hence,  $S$,
and so the image $D$ of $D'$, are not contained in quadrics.
Moreover, since $D$ is a Castelnuovo's curve, we have (compare with
(\ref{b1}))
\begin{equation}\label{pq1}
p_a(D)=p_a(D')=G(s+1;d)>\frac{d^2}{2s}-\frac{d}{2s}(s+2).
\end{equation}

\medskip
In view of (\ref{pq}) and (\ref{pq1}), in order to prove
(\ref{bqs}), we may assume that $C$ is contained in a surface $S$ of
degree $s$. By (\ref{basic4}), the sectional genus of $S$ should be
$\pi=0$, otherwise $S$ is contained in a quadric (here we have to
assume $k\neq 1$, i.e. that $r$ is not divisible by $3$). Now, if
$C$ is contained in a surface $S$ of degree $s$ and sectional genus
$\pi=0$, then by \cite[Lemma]{PAMS} we know that
$$
p_a(C)\leq \frac{d^2}{2s}-\frac{d}{2s}(s+2)+O(1),
\quad{\text{with}}\quad O(1)\leq \frac{s^3}{r-2}.
$$
Taking into account (\ref{pq1}), we deduce (\ref{bqs}).

\medskip
Since $d>d_1(r,s)$, by Bezout's theorem  the surface $S$ containing
$C$ is unique. By (\ref{basic4}) and the definition of $s$, it
follows that $S$ is contained in a cubic hypersurface. Moreover, $C$
cannot be a.C.M.. Otherwise, $M(S)=0$ (see Section 2, $(vii)$). In
particular, $H^1(\mathbb P^r,\mathcal I_S(1))=0$, and so the
restriction map $H^0(\mathbb P^r,\mathcal I_S(2))\to H^0(\mathbb
P^{r-1},\mathcal I_{\mathbb P^{r-1},\Sigma}(2))$ is onto
($\Sigma\,=$ the general hyperplane section of $S$). This is
impossible, because, in view of (\ref{basic}) and (\ref{basic3}),
$\Sigma$ is  contained in a quadric, and $S$ not. This concludes the
proof of Proposition \ref{qboundPr}.

\begin{remark}\label{k=1}
When $3$ divides $r$, i.e. when $k=1$, previous argument does not
work, because it may happen that, for $\pi>0$, a surface of degree
$s$ ($3s+1=\binom{r+2}{2}$) is not contained in quadrics (however,
it is necessary that $\pi\leq 1$ by (\ref{basic4})). For instance,
one may consider a general projection $S$ in $\mathbb P^6$ of a
$3$-ple Veronese embedding of $\mathbb P^2$ in $\mathbb P^9$. In
this case, repeating the same argument as before (compare with
\cite[Theorem 3]{BE} and \cite[loc. cit.]{PAMS}), one may prove that
the sharp bound (for curves in $\mathbb P^6$ not contained in
quadrics, and of degree $d=0$ mod. $3$, $d>d_1(r,s)$) is
\begin{equation}\label{k=1.1}
p_a(C)=\frac{d^2}{2s}-\frac{d}{2}+O(1),
\end{equation}
with $s=9$, and $0<O(1)\leq 182$. The formula (\ref{k=1.1})  remains
true (with $0<O(1)\leq \frac{s^3}{r-2}$) if there exists a surface
of degree $s$ in $\mathbb P^r$, with sectional genus $\pi=1$, and
not contained in quadrics (at least when $\epsilon=s-1$, i.e. when
$d$ is a multiple of $s$).
\end{remark}

\bigskip
\section{The proof of Proposition \ref{cboundPr}.}

The proof is quite similar to the proof of previous Proposition
\ref{qboundPr}. Hence, we omit some details.

\medskip
By the definition of $s$ and  by (\ref{basic4}), every surface of
degree $<s$ is contained in a cubic (here we need that
$6s+4\leq\binom{r+3}{3}$). In particular, the extremal curve $C$
cannot be contained in a surface of degree $<s$. Moreover, if $C$ is
not contained in a surface of degree $<s+1$, then (compare with
(\ref{es2}), (\ref{es1}), and (\ref{b1}))
\begin{equation}\label{.pq}
p_a(C)\leq G(r;d,s+1)<
G(s+1;d)=\frac{d^2}{2s}-\frac{d}{2s}(s+2)+\frac{1+\epsilon}{2s}(s+1-\epsilon).
\end{equation}

\medskip
On the other hand, let $S'\subset \mathbb P^{s+1}$ be a smooth
rational normal scroll surface, of degree $s$. Let $D'$ be a
Castelnuovo's curve on $S'$ of degree $d$. Let $S\,(\cong S')$ be
the general projection of $S'$ in $\mathbb P^r$. By \cite[loc.
cit.]{BE}, $S$ is of maximal rank. Since
$$
h^0(\mathbb P^r, \mathcal O_{\mathbb P^r}(3))=\binom{r+3}{3}=
6s+4=h^0(S, \mathcal O_{S}(3))
$$
(compare with the definition of $s$ and (\ref{scroll})), it follows
that the restriction map $ H^0(\mathbb P^r, \mathcal O_{\mathbb
P^r}(3))\to H^0(S, \mathcal O_{S}(3))$ is injective (actually, here
we only need that $6s+4\geq\binom{r+3}{3}$). Hence, $S$, and so the
image $D$ of $D'$, are not contained in  quadrics. Moreover, since
$D$ is a Castelnuovo's curve, we have (compare with (\ref{b1}))
\begin{equation}\label{.pq1}
p_a(D)=p_a(D')=G(s+1;d)>\frac{d^2}{2s}-\frac{d}{2s}(s+2).
\end{equation}

\medskip
In view of (\ref{.pq}) and (\ref{.pq1}), in order to prove
(\ref{bqs1}), we may assume that $C$ is contained in a surface $S$
of degree $s$. By (\ref{basic4}), the sectional genus of $S$ should
be $\pi=0$, otherwise $S$ is contained in a cubic (again, here we
need that $6s+4\leq\binom{r+3}{3}$). Now, if $C$ is contained in a
surface $S$ of degree $s$ and sectional genus $\pi=0$, then by
\cite[Lemma]{PAMS} we know that
$$
p_a(C)\leq \frac{d^2}{2s}-\frac{d}{2s}(s+2)+O(1),
\quad{\text{with}}\quad O(1)\leq \frac{s^3}{r-2}.
$$
Taking into account (\ref{.pq1}), we deduce (\ref{bqs1}). One may
prove the remaining properties in a similar way as in Proposition
\ref{qboundPr}. This concludes the proof of Proposition
\ref{cboundPr}.

\bigskip
\section{Appendix.}

In order to conclude the proof of Theorem \ref{bound}, first part,
it remains  to prove that {\it if $C$ is not contained in a surface
of degree $<5$, then $p_a(C)<\frac{1}{8}d(d-6)+1$}, when $16 < d
\leq 143$. We are going to do an analysis similar to the one that
appears in \cite[pp. 74-75]{DGFr}. The same calculation in
\cite[loc. cit.]{DGFr} proves that $p_a(C)<\frac{1}{8}d(d-6)+1$ for
$d> 30$. We need to refine the argument to deal with the case $16 <d
\leq 30$. Let $\Gamma\subset \mathbb P^3$ be the general hyperplane
section of $C$. In the sequel, we will apply  (\ref{basic2}),
(\ref{basic2.1}), (\ref{basic2.2}), and (\ref{basic2.3}) (see
Section $2$, $(i)$).

\bigskip $\bullet$ Case I: $h^0(\mathbb P^3,\mathcal
I_{\Gamma}(2))\geq 2$.

This case can't happen. Otherwise, since $d> 4$, by monodromy
\cite[Proposition 2.1]{CCD}, $\Gamma $ would be contained in an
integral curve of $\mathbb P^3$ of degree $\leq 4$. Since $d>16$,
from \cite[Theorem (0.2)]{CC} we would deduce that $C$ is contained
in a surface of degree $\leq 4 $, against our hypothesis.

\bigskip $\bullet$ Case II: $h^0(\mathbb P^3,\mathcal
I_{\Gamma}(2))=1$ and $h^0(\mathbb P^3,\mathcal I_{\Gamma}(3))>4$.

This is the most complicated case. Since $d> 6$, by monodromy
\cite[Proposition 2.1]{CCD}, $\Gamma$ is contained in an integral
curve $ X $ of $ \mathbb P^3 $ of degree $ \leq 6$. Based on what
was said in the previous case, we may suppose the degree of $X$ is $
5 $ or $ 6 $.

{\it First assume $\deg X=5$.}

If $d\geq 21$, by Bezout's theorem we have $h_{\Gamma}(i)=h_X(i)$
for all $i\leq 4$. Let $X'$ be the general hyperplane section of
$X$. By (\ref{basic2}) we get $h_X(i)\geq \sum_{j=0}^{i}h_{X'}(j)$.
By (\ref{basic2.3}) it follows that $h_X(3)\geq 14$ and $h_X(4)\geq
19$. Therefore, if $d\geq 21$, taking into account (\ref{basic2.2}),
we get:
$$
h_{\Gamma}(1)=4,\, h_{\Gamma}(2)=9,\, h_{\Gamma}(3)\geq 14,\,
h_{\Gamma}(4)\geq 19,
$$
$$
h_{\Gamma}(5)\geq \min\{d,\, 22\}, \, h_{\Gamma}(6)\geq \min\{d,\,
27\},\, h_{\Gamma}(7)=d.
$$

If $28\leq d\leq 30$, using (\ref{basic2.1}) we deduce:
$$
p_a(C)\leq \sum_{i=1}^{+\infty}d-h_{\Gamma}(i)\leq
(d-4)+(d-9)+(d-14)+(d-19)+8+3=4d-35,
$$
which is $<\frac{1}{8}d(d-6)+1$.

If $24\leq d\leq 27$, we have:
$$
p_a(C)\leq \sum_{i=1}^{+\infty}d-h_{\Gamma}(i)\leq
(d-4)+(d-9)+(d-14)+(d-19)+5=4d-41,
$$
which is $<\frac{1}{8}d(d-6)+1$, except for $d=24$ for which we have
$4d-41=\frac{1}{8}d(d-6)+1$. However, $p_a(C)$ should be strictly
less than such number, otherwise $p_a(C)=
\sum_{i=1}^{+\infty}d-h_{\Gamma}(i)$ and $C$ would be a.C.M.
(Section 2, $(i)$). This is impossible, because $C$ is not contained
in quadrics, while $h^0(\mathbb P^3,\mathcal I_{\Gamma}(2))=1$ (we
will use this argument later as well).

If $21\leq d\leq 23$, we have:
$$
p_a(C)\leq \sum_{i=1}^{+\infty}d-h_{\Gamma}(i)\leq
(d-4)+(d-9)+(d-14)+(d-19)+1=4d-45,
$$
which is $<\frac{1}{8}d(d-6)+1$.

Now assume $17\leq d\leq 20$. By Bezout's theorem we have
$h_{\Gamma}(3)=h_X(3)$. Hence we get:
$$
h_{\Gamma}(1)=4,\, h_{\Gamma}(2)=9,\, h_{\Gamma}(3)\geq 14,\,
h_{\Gamma}(4)\geq 17, \, h_{\Gamma}(7)=d.
$$
And we may conclude with a calculation similar to the previous one
(when $ 18 \leq d \leq 20 $, we use again the fact that $ C $ cannot
be a.C.M.).

{\it Assume  $\deg X=6$.}

In this case $X$ is a complete intersection of bi-degree $(2,3)$.

If $d\geq 25$, by Bezout's theorem we have  $h_{\Gamma}(i)=h_X(i)$
for every $i\leq 4$. Let $X'$ be the general plane section of $X$.
Similarly as before, since $\deg X=6$, we have $h_X(3)\geq 15$ e
$h_X(4)\geq 21$. Therefore, if $d\geq 25$, we have:
$$
h_{\Gamma}(1)=4,\, h_{\Gamma}(2)=9,\, h_{\Gamma}(3)\geq 15,\,
h_{\Gamma}(4)\geq 21,
$$
$$
h_{\Gamma}(5)\geq \min\{d,\, 23\}, \, h_{\Gamma}(6)\geq \min\{d,\,
29\},\, h_{\Gamma}(7)=d.
$$
It follows that:
$$
p_a(C)\leq \sum_{i=1}^{+\infty}d-h_{\Gamma}(i)\leq
(d-4)+(d-9)+(d-15)+(d-21)+7=4d-42,
$$
which is $<\frac{1}{8}d(d-6)+1$.

If $19\leq d\leq 24$ we have $h_{\Gamma}(3)=h_X(3)$, and so:
$$
h_{\Gamma}(1)=4,\, h_{\Gamma}(2)=9,\, h_{\Gamma}(3)\geq 15,\,
h_{\Gamma}(4)\geq 18, h_{\Gamma}(5)\geq \min\{d,\, 23\}, \,
h_{\Gamma}(6)=d.
$$
Hence:
$$
p_a(C)\leq \sum_{i=1}^{+\infty}d-h_{\Gamma}(i)\leq
(d-4)+(d-9)+(d-15)+(d-18)+1=4d-45,
$$
which is $<\frac{1}{8}d(d-6)+1$. It remains to examine the cases
$d=18$ e $d=19$.

If $d=18$ and $h_{\Gamma}(3)=h_X(3)$, then $h_{\Gamma}(3)\geq 15$.
Otherwise, $\Gamma$ is a complete intersection of type $(2,3,3)$,
and from Koszul's complex we get $h_{\Gamma}(3)\geq 14$. Therefore,
in every case, we have:
$$
h_{\Gamma}(1)=4,\, h_{\Gamma}(2)=9,\, h_{\Gamma}(3)\geq 14,\,
h_{\Gamma}(4)\geq 17,\, h_{\Gamma}(5)=18.
$$
Hence $p_a(C)< \sum_{i=1}^{+\infty}18-h_{\Gamma}(i)\leq
28=\frac{1}{8}18(18-6)+1$.

When $d=17$, previous argument does not work. We may argue as
follows. If $h_{\Gamma}(3)=h_{X}(3)$ we may repeat previous
computation (in this case $h_X(3)\geq 15$). Otherwise,  $h^0(\mathbb
P^3,\mathcal I_{\Gamma}(3))>h^0(\mathbb P^3,\mathcal I_{X}(3))$. On
the other hand, since $X$ is a complete intersection of bi-degree
$(2,3)$, we have $h^1(\mathbb P^3,\mathcal I_{X}(2+i))=h^2(\mathbb
P^3,\mathcal I_{X}(2+i))=0$ for all $i\geq 0$. It follows that
\cite[Corollary 1.2]{CCD}
$$
\Delta h_{\Gamma}(4)\leq \max\left\{0,\, \Delta
h_{\Gamma}(3)-2\right\}.
$$
If $\Delta h_{\Gamma}(4)\leq 0$ then $h_{\Gamma}(3)=17$. Otherwise,
$\Delta h_{\Gamma}(4)\leq \Delta h_{\Gamma}(3)-2$, hence
$h_{\Gamma}(3)\geq 14$, and we may proceed as in  previous
computation.

\bigskip $\bullet$ Case III : $h^0(\mathbb P^3,\mathcal
I_{\Gamma}(2))=1$ e $h^0(\mathbb P^3,\mathcal I_{\Gamma}(3))=4$.

We have:
$$
h_{\Gamma}(1)=4,\, h_{\Gamma}(2)=9,\, h_{\Gamma}(3)=16, \,
h_{\Gamma}(4)\geq \min\{d,\, 19\},
$$
$$
h_{\Gamma}(5)\geq \min\{d,\, 24\}, \, h_{\Gamma}(6)=d.
$$

If $26\leq d\leq 30$, we have:
$$
p_a(C)< \sum_{i=1}^{+\infty}d-h_{\Gamma}(i)\leq
(d-4)+(d-9)+(d-16)+11+6=3d-12,
$$
which is $\leq \frac{1}{8}d(d-6)+1$.

If $21\leq d\leq 25$, we have:
$$
p_a(C)< \sum_{i=1}^{+\infty}d-h_{\Gamma}(i)\leq
(d-4)+(d-9)+(d-16)+6+1=3d-22,
$$
which is $\leq \frac{1}{8}d(d-6)+1$, except the case $d=21$. In this
case $3d-22=41$, hence $p_a(C)\leq 40<
\frac{1}{8}21(21-6)+1=40+\frac{3}{8}$.

If $17\leq d\leq 20$, we have:
$$
p_a(C)< \sum_{i=1}^{+\infty}d-h_{\Gamma}(i)\leq
(d-4)+(d-9)+(d-16)+1=3d-28,
$$
which is $\leq \frac{1}{8}d(d-6)+1$.

\bigskip $\bullet$ Case IV: $h^0(\mathbb P^3,\mathcal
I_{\Gamma}(2))=0$.

We have:
$$
h_{\Gamma}(1)=4,\, h_{\Gamma}(2)=10,\, h_{\Gamma}(3)\geq 13, \,
h_{\Gamma}(4)\geq \min\{d,\, 19\},
$$
$$
h_{\Gamma}(5)\geq \min\{d,\, 22\},\, h_{\Gamma}(6)\geq \min\{d,\,
28\}, \, h_{\Gamma}(6)=d.
$$
So, we may conclude with a similar computation as in the previous
case.

\bigskip
\begin{remark}
From previous analysis it follows that {\it if $C\subset\mathbb P^4$
is not contained in quadrics, and $d>16$, then $h^0(\mathbb
P^3,\mathcal I_{\Gamma}(2))\leq 1$}. In fact, otherwise $C$ should
be contained  in an isomorphic projection of the Veronese surface.
Therefore, $\Gamma$ should be contained in a rational quartic $X$ in
$\mathbb P^3$. And $h_{\Gamma}(2)=h_X(2)=9$ (in fact $h^1(\mathbb
P^3,\mathcal I_X(2))=0$).
\end{remark}

\end{document}